\newcounter{item}[section]
\newcounter{kirshr}
\newcounter{kirsha}
\newcounter{kirshb}
\newenvironment{enumarab}{\setcounter{kirshb}{1}
\begin{list}{(\arabic{kirshb})}{\usecounter{kirshb}} }{\end{list}}
\newtheorem{theorem}{Theorem}[section]
\newtheorem{lemma}[theorem]{Lemma}
\newenvironment{demo}[1]{\noindent{\bf #1.}\upshape\mdseries}
{\nopagebreak{\hfill\rule{2mm}{2mm}\nopagebreak}\par\normalfont}
\theoremstyle{definition}
\newtheorem{example}[theorem]{Example}
\newtheorem{definition}[theorem]{Definition}
\def\Q{\mathbb{Q}}
\def\C{{\mathfrak{C}}}
\def\Fm{{\mathfrak{Fm}}}
\def\Nr{{\mathfrak{Nr}}}
\def\Fr{{\mathfrak{Fr}}}
\def\Sg{{\mathfrak{Sg}}}
\def\Fm{{\mathfrak{Fm}}}
\def\A{{\mathfrak{A}}}
\def\B{{\mathfrak{B}}}
\def\C{{\mathfrak{C}}}
\def\CA{{\bf CA}}
\def\(R)RA{{\bf (R)RA}}
\def\Q{\mathbb{Q}}
 \def\CA{{\sf CA}}
\def\B{{\sf B}}
\def\Sg{{\mathfrak{Sg}}}
\def\Nr{{\mathfrak{Nr}}}
\def\cyl#1{{\sf c}_{#1}}
\def\Nr{{\mathfrak{Nr}}}
\def\A{{\mathfrak{A}}}
\def\B{{\mathfrak{B}}}
\def\C{{\mathfrak{C}}}
\def\A{{\mathfrak{A}}}
\def\B{{\mathfrak{B}}}
\def\C{{\mathfrak{C}}}
\def\CA{{\bf CA}}
\title{Completeness, interpolation and omitting types in infinitary predicate topological logic}
\author{Tarek Sayed Ahmed \\
Department of Mathematics, Faculty of Science,\\ 
Cairo University, Giza, Egypt.
  }
\begin{document}
\maketitle

\begin{abstract}
\noindent  We prove all properties in the title for infinitary  topological logics,  using topological infinite dimensional dimension complemented 
cylindric algebras.
%by studying expansions by quantifiers and substitutions of various 
\end{abstract}

\section{Inroduction and basics}
Topological logic was introduced by Makowsky and Ziegler \cite{z} and Sgro \cite{s}.
 Such logics have a classical semantics with a topological touch; and their study was pursued  using algebraic logic by Georgescu\cite{g}. 
The models carry a topology. Here we carry out a similar investigation, using also an algebraic approach, but proving much stronger results.

Throughout, $\alpha$ is an infinite ordinal. 
Instead of taking ordinary set algebras with units of the form $^{\alpha}U$, one 
requires that the base $U$ is endowed with some topology. Then, given such an algebra, 
for each $k<\alpha$ one defines an {\it interior operator} on $\wp(^{\alpha}U)$ by
$$I_k(X)=\{s\in {}^{\alpha}U; s_k\in int\{s\in U: s_u^k\in X\}, X\subseteq {}^{\alpha}U.$$
Notice that in the case of discrete topolgy this gives nothing new, we are just adding identity operators, which adds absolutely nothing at all.

These operations can also be defined on weak spaces, that is sets of sequences agreeing cofinitely with a given fixed sequence.
In more detil, a weak space is one of the form $\{s\in ^{\alpha}U \{i\in \alpha: s_i\neq p_i\}|<\omega\}$, for a given fixed in advance $p\in {}^{\alpha}U$.
such a space is denoted by $^{\alpha}U^{(p)}$. 

Now such algebras lend itself to an abstract formulation aiming to capture he concrete set algebras; or rather the variety generted by them.
This consists of expanding the similarity types unary operators, 
one for each $k<\alpha$, satisfying certain identities; we abbreviate these algebras by $TCA_{\alpha}$, 
short for topological cylindric  algebras.

Such an axiomatization is essentially equivalent to that given in \cite{g}, 
since locally finite polyadic algebras and cylindric algebras are  equivalent.

In \cite{g}  a representation theorem is proved  locally finite algebras; 
here we extend this theorem in 3 ways. We prove a strong representation theorem for dimensiion complemented algebras,
the logic corresponding to such algebras and interpolation for dimension complemented algebras, 
and we prove omitting types and interpolation for such logics. 
The constructions used are standard Heknin constructions; for the semantics given allows such 
proofs. Finally we count the pairwise-non isomorphic models, and we show that in non trivial cases if $T$ is a countable theory then the number of 
pairwise non-isomorphic models is the continuum.
We start by the definition of cylindric algebras:
\begin{definition}
Let $\alpha$ be an ordinal. A cylindric algebra of dimension $\alpha$, a  $\CA_{\alpha}$ for short, 
is defined to be an algebra
$$\C=\langle C, +, -, 0, 1, \cyl{i}, \sf{d_{ij}}  \rangle_{i,j\in \alpha}$$
obeying the following axioms for every $x,y\in C$, $i,j,k<\alpha$

\begin{enumerate}

\item The equations defining boolean algebras

\item $\cyl{i}0=0$

\item $x\leq \cyl{i}x$

\item $\cyl{i}(x\cdot \cyl{i}y)=\cyl{i}x\cdot .\cyl{i}y$

\item $\cyl{i}\cyl{j}x=\cyl{j}\cyl{i}x$

\item ${\sf d}_{ii}=1$

\item if $k\neq i,j$ then ${\sf d}_{ij}=\cyl{k}({\sf d}_{ik}.{\sf d}_{jk})$

\item If $i\neq j$, then ${\sf c}_i({\sf d}_{ij}.x).{\sf c}_i({\sf d}_{ij}.-x)=0$

\end{enumerate}
\end{definition}
For a cylindric algebra $\A$, we set $c_i^{\partial}x=-c_i-x$ and $s_i^j(x)=c_i(d_{ij}.x)$.
We consider only infinite dimensional cylindric algebras.
In that we follow the notation of \cite{HMT1}. We are concerned with  algebras of dimension $\alpha$ endowed by unary operation $I(i)$ 
for each $i\in \alpha$
\begin{definition} An interior cylindric algebra  is of the form $(\A,I(i))_{i<\alpha}$ where $\A\in CA_{\alpha}$ and for each $i<\alpha$, $I(i)$ is a unary operation on 
$A$ called an interior operators satisfying:
\begin{enumerate}
\item $c_i^{\partial}(p\leftrightarrow q)\leq \forall i (I(i)p\leftrightarrow I(i)q)$

Here $p\leftrightarrow q$ is short for $[(-p+q).(-q+p).]$ 
\item $I(i)p\leq p$
\item $I(i)p\cdot I(i)p=I(i)(p\cdot q)$
\item $p\leq I(i)I(i)p$
\item $I(i)1=1$
\item $s_j^iI(i)p=I(j)s_j^ip$
\end{enumerate}
\end{definition}

%The typical example of an interior  cylindric algebra is taking a $Cs_{\alpha}$ with base $U$ 
%where $U$ has a topology and one defines $I(i)$ by
%$$I_i(X)=\{s\in {}^{\alpha}U: u_k\in int\{a\in U: u^k_a\in X\}\}.$$ In our work however, we shall consider representations via 
%weak set algebras $Ws_{\alpha}.$
%$A\in Ws_{\alpha}$ if its greatest element is of the form $^{\alpha}U^{p)}=\{s\in {}^{\alpha}U: |\{i\in \alpha: s_i\neq p_ i\}|<\omega\}$. 
%It is easy to see that such algebras endowed with the above operations is an interior algebra.

\section{Completeness and Interpolation}
To prove completeness we formulate and prove two lemmas. Properties of substitutions reported in \cite{HMT1} 
are freely used. For example, for every finite transformation $\tau$ we have a unary operation
${\sf s}_{\tau}$ tha happens to be a Boolean endomorphism.

\begin{lemma} Let $\C$ be a cylindric algebra. 
Let $F$ be a Boolean  ultrafilter in $C$. Define the relation $E$ on $\omega$ by $(i,j)\in E$ if and only if
$d_{ij}\in F$. Then $E$ is an equivalence relation.
\end{lemma}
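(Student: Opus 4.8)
The plan is to verify the three defining properties of an equivalence relation---reflexivity, symmetry, and transitivity---by combining two purely algebraic facts about diagonal elements with the closure properties of the ultrafilter $F$. Reflexivity is immediate: by axiom (6) we have $\diag{i}{i}=1$, and since $F$ is a Boolean ultrafilter it contains the unit, so $(i,i)\in E$ for every $i$.

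For symmetry I would first establish the algebraic identity $\diag{i}{j}=\diag{j}{i}$. Choosing any index $k$ distinct from $i$ and $j$ (which exists because the dimension is infinite), axiom (7) gives $\diag{i}{j}=\cyl{k}(\diag{i}{k}\cdot\diag{j}{k})$ and likewise $\diag{j}{i}=\cyl{k}(\diag{j}{k}\cdot\diag{i}{k})$. Since meet is commutative the two right-hand sides coincide, whence $\diag{i}{j}=\diag{j}{i}$. Consequently $\diag{i}{j}\in F$ if and only if $\diag{j}{i}\in F$, which is exactly symmetry of $E$.

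The heart of the argument, and the step I expect to require the most care, is transitivity, for which I would prove the inequality $\diag{i}{j}\cdot\diag{j}{k}\le\diag{i}{k}$. When $i,j,k$ are pairwise distinct, I instantiate axiom (7) with $j$ playing the role of the cylindrified coordinate to obtain $\diag{i}{k}=\cyl{j}(\diag{i}{j}\cdot\diag{k}{j})$; rewriting $\diag{k}{j}=\diag{j}{k}$ by the symmetry identity just proved and invoking axiom (3), namely $x\le\cyl{j}x$, yields $\diag{i}{j}\cdot\diag{j}{k}\le\cyl{j}(\diag{i}{j}\cdot\diag{j}{k})=\diag{i}{k}$. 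The degenerate cases in which two of the indices coincide are handled directly using $\diag{i}{i}=1$.

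With the inequality in hand, transitivity of $E$ follows from the filter axioms: if $\diag{i}{j},\diag{j}{k}\in F$ then $\diag{i}{j}\cdot\diag{j}{k}\in F$ because a filter is closed under meet, and upward closure together with the inequality forces $\diag{i}{k}\in F$, i.e. $(i,k)\in E$. The only subtlety to track is ensuring a fresh index is available at each application of axiom (7), which is guaranteed throughout since $\alpha$ is infinite, so the relabellings causing no index clash are always legitimate.
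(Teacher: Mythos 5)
Your proof is correct and follows essentially the same route as the paper: reflexivity from ${\sf d}_{ii}=1$, symmetry from ${\sf d}_{ij}={\sf d}_{ji}$, and transitivity from the inequality ${\sf d}_{ij}\cdot {\sf d}_{jk}\leq {\sf c}_j({\sf d}_{ij}\cdot {\sf d}_{jk})={\sf d}_{ik}$ combined with the filter properties of $F$. You simply spell out the derivations (of symmetry and of the key inequality from the diagonal axioms) that the paper states without proof.
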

 \begin{proof}
$E$ is reflexive because ${\sf d}_{ii}=1$ and symmetric 
because ${\sf d}_{ij}={\sf d}_{ji}.$
$E$ is transitive because   $F$ is a filter and for all $k,l,u<\alpha$, with $l\notin \{k,u\}$, 
we have 
${\sf d}_{kl}.{\sf d}_{lu}\leq {\sf c}_l({\sf d}_{kl})$
\end{proof}

\begin{lemma} \label{t} Let everything be as in above lemma, but assume that $\C$ is dimension complemented. Let
$V=\{\tau\in {}^{\alpha}\alpha: |\{i\in \alpha: \tau(i)\neq i\}|<\omega\}$.
Let $F$ be 
the Boolean ultrafilter of $C$, and $E$ the equivalence relation on $\omega$. 
For $\sigma, \tau\in V$,  write 
$$\sigma\equiv_E\tau\textrm {  iff } 
(\forall i\in \mu) (\sigma (i),\tau(i))\in E.$$ 
and let 
$$\bar{E}=\{(\sigma,\tau)\in {}^2V: \sigma\equiv_E \tau\}.$$
Then $\bar{E}$ is an euivalence relation on $V$. Let $W= V/\bar{E}$ 
For $h\in W,$ write $h=\bar{\tau}$ for $\tau\in V$ such that
$\tau(j)/E=h(j)$ for all $j\in \mu$. 
Let 
$f(x)=\{ \bar{\tau} \in W: {\sf s}_{\tau}x\in F\}.$  
Then $f$ is well defined and is a homomorphism.
\end{lemma}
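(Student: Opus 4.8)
The plan is to verify, in turn, that $f$ is well defined, that it respects the Boolean operations, the diagonals, and finally the cylindrifications, the last being the only genuinely delicate point.

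First, well-definedness. Given $\sigma\equiv_E\tau$, I would exploit the standard cylindric identity (see \cite{HMT1}) ${\sf s}_\sigma x\cdot d_{\sigma\tau}={\sf s}_\tau x\cdot d_{\sigma\tau}$, where $d_{\sigma\tau}=\prod_{i<\alpha}d_{\sigma(i)\tau(i)}$. Because $\sigma,\tau\in V$ agree with the identity off a finite set, all but finitely many factors equal ${\sf d}_{ii}=1$, so the product is a genuine element of $C$; and since each factor $d_{\sigma(i)\tau(i)}$ lies in $F$ (this is precisely $\sigma\equiv_E\tau$), the finite meet $d_{\sigma\tau}$ lies in $F$ as well. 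Hence if ${\sf s}_\sigma x\in F$ then ${\sf s}_\sigma x\cdot d_{\sigma\tau}\in F$, so ${\sf s}_\tau x\cdot d_{\sigma\tau}\in F$ by the identity, whence ${\sf s}_\tau x\in F$ by upward closure; the converse is symmetric. Thus membership of ${\sf s}_\tau x$ in $F$ depends only on $\bar\tau$, and $f$ is well defined.

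Next, the Boolean clauses. Here I would use the two facts already recorded: each ${\sf s}_\tau$ is a Boolean endomorphism, and $F$ is an ultrafilter, hence prime. Thus ${\sf s}_\tau(-x)=-{\sf s}_\tau x$, and $-{\sf s}_\tau x\in F$ iff ${\sf s}_\tau x\notin F$, giving $f(-x)=W\setminus f(x)$; similarly ${\sf s}_\tau(x+y)={\sf s}_\tau x+{\sf s}_\tau y\in F$ iff ${\sf s}_\tau x\in F$ or ${\sf s}_\tau y\in F$, giving $f(x+y)=f(x)\cup f(y)$, while $f(0)=\emptyset$ and $f(1)=W$ are immediate. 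For the diagonals I would use ${\sf s}_\tau d_{ij}=d_{\tau(i)\tau(j)}$; then $\bar\tau\in f(d_{ij})$ iff $d_{\tau(i)\tau(j)}\in F$ iff $(\tau(i),\tau(j))\in E$ iff $\bar\tau(i)=\bar\tau(j)$, which is exactly the diagonal set $D_{ij}$ of the target weak set algebra.

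Finally, cylindrification, $f(c_i x)=C_i f(x)$, which I expect to be the main obstacle. One inclusion is soft: if $\bar\sigma\in f(x)$ with $\bar\sigma$ agreeing with $\bar\tau$ off coordinate $i$, then, using well-definedness to pick representatives agreeing off $i$, we get ${\sf s}_\sigma x\leq {\sf s}_\sigma c_i x={\sf s}_\tau c_i x$, the last equality because $i\notin\Delta(c_i x)$ and two substitutions agreeing outside the dimension set of their argument give equal values \cite{HMT1}; hence ${\sf s}_\tau c_i x\in F$ and $\bar\tau\in f(c_i x)$. The reverse inclusion is the crux: from ${\sf s}_\tau c_i x\in F$ I must produce an actual witness, i.e.\ some $v<\alpha$ with ${\sf s}_{\tau[i\mapsto v]}x\in F$, so that $\sigma=\tau[i\mapsto v]$ witnesses $\bar\tau\in C_i f(x)$. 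This existential step does not follow from $F$ being merely an ultrafilter, since $c_i x$ is in general strictly above every substitution instance $s_i^v x$; it is exactly here that the hypotheses must enter. Dimension-complementedness guarantees infinitely many spare indices $v\notin\Delta x\cup\{i\}$, and the ultrafilter $F$ must be taken (through the Henkin-style construction underlying the completeness proof) to enjoy the witness property that $c_i y\in F$ forces $s_i^v y\in F$ for some such spare $v$. Transporting this witness through ${\sf s}_\tau$ by the composition laws for substitutions then yields the required $\sigma$, completing the verification that $f$ is a homomorphism.
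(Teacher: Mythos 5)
Your proposal is correct, and it is substantially more explicit than the paper's own proof, which consists of two sentences: well-definedness is asserted ``by induction on the cardinality of $J=\{i:\sigma i\neq \tau i\}$'', and the homomorphism property is delegated entirely to the citation \cite{IGPL}. On well-definedness your route differs in packaging rather than substance: you invoke the identity ${\sf d}_{\sigma\tau}\cdot {\sf s}_{\sigma}x={\sf d}_{\sigma\tau}\cdot {\sf s}_{\tau}x$, where ${\sf d}_{\sigma\tau}=\prod_{i}{\sf d}_{\sigma(i)\tau(i)}$ is a genuine (finite) product because $\sigma,\tau$ are finite transformations, whereas the paper runs the induction on the number of disagreements that would ordinarily be used to prove that very identity. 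The Boolean and diagonal clauses and the inclusion $C_if(x)\subseteq f({\sf c}_ix)$ are the standard arguments in both treatments.

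The real point of divergence is the inclusion $f({\sf c}_ix)\subseteq C_if(x)$, and here your diagnosis is a correction of the paper's statement rather than a defect of your proof. For an arbitrary Boolean ultrafilter $F$ the lemma is false: in a Lindenbaum--Tarski algebra (which is dimension complemented), the set $\{{\sf c}_1x\}\cup\{-{\sf s}^1_jx: j<\alpha\}$ is finitely consistent for suitable $x$ (take $x$ to be the formula $R(v_0,v_1)$ over the empty theory), so it extends to an ultrafilter $F$, and then $\overline{Id}\in f({\sf c}_1x)\setminus C_1f(x)$. So a witness condition on $F$ is indispensable, and the one you impose --- that ${\sf c}_iy\in F$ forces ${\sf s}^i_vy\in F$ for some spare $v$ --- is precisely the condition $(*)$ that the paper itself writes down later inside the proof of the interpolation theorem, and precisely what the representation theorem following this lemma arranges by generating the filter from $\{a\}\cup\{-{\sf c}_{k_i}x_i+{\sf s}^{k_i}_{u_i}x_i: i\in\kappa\}$ before extending to an ultrafilter; it is also the hypothesis under which the corresponding lemma of \cite{IGPL} is proved. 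In short, your proof establishes exactly the statement that is true and that the paper actually uses, while the lemma's wording (``let $F$ be the Boolean ultrafilter of $C$'', carried over unchanged from the preceding lemma) omits the witness hypothesis that your argument correctly identifies as essential.
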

\begin{proof} For this, it clearly suffices to show that for  
$\sigma, \tau\in V$ and $x\in {\cal A}_i$ if $\sigma \bar{E} \tau$, 
then $${\sf s}_{\tau}x\in F\textrm { iff } {\sf s}_{\sigma}x\in F.$$
This can be proved by induction on the cardinality of 
$$J=\{i\in \mu: \sigma i\neq \tau i\}.$$
Then it is easy to check that $f$ is a homomrphism follows from \cite{IGPL}.
\end{proof}

\begin{theorem} 
Let $(\A, I)$ be an interior algebra such that $\A$ is dimension complemented. 
Then for every non-zero $a$ there is a $(\B, I)$ such that $\B$ is weak set algebra, with an interior operator,
anf $f:\A\to \B$ such that $f(a)\neq 0.$
\end{theorem}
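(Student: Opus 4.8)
The plan is to feed the two preceding lemmas into the construction of a weak set algebra together with a cylindric homomorphism, and then to equip the base of that algebra with a topology forcing the abstract interior operators to agree with the concrete ones; the topological step is the only ingredient not already contained in the $\CA$-theory.

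Since $a\neq 0$, I would first extend $\{a\}$ to a Boolean ultrafilter $F$ of $\A$. The first lemma yields the equivalence relation $E$ on $\alpha$, and Lemma~\ref{t} (whose verification of the cylindric axioms, cited there from \cite{IGPL}, is precisely where dimension complementedness is used to provide witnesses for the cylindrifications) yields the base $U=\alpha/E$, the weak space $W=V/\bar E$ with distinguished point $p=\overline{\mathrm{Id}}$, and a $\CA_{\alpha}$-homomorphism
\[
 f\colon\A\to\wp(W),\qquad f(x)=\{\bar\tau\in W:\ {\sf s}_{\tau}x\in F\},
\]
into the weak set algebra on the base $U$ with unit $W={}^{\alpha}U^{(p)}$. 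Because ${\sf s}_{\mathrm{Id}}$ is the identity operator and $a\in F$, we already have $p\in f(a)$, so $f(a)\neq 0$; it remains only to put a topology on $U$ for which $f(I(i)x)=I_i(f(x))$ for all $x\in A$ and $i<\alpha$, where $I_i$ is the interior operator induced on $\wp(W)$ as in \S1. The pair $(\B,(I_i)_{i<\alpha})$ is then the required algebra and $f$ the required homomorphism.

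For the topology I would take as a subbase the fibre-traces of the open elements,
\[
 B_{i,y}=\{\eta/E\in U:\ \sub{i}{\eta}I(i)y\in F\}\qquad(i<\alpha,\ y\in A),
\]
calling a subset of $U$ open when it is a union of finite intersections of such sets. That $B_{i,y}$ is well defined on $E$-classes comes from the extensionality axiom~(1) together with the defining property of $E$. That the $B_{i,y}$ form a base is a technical point: within one coordinate the meet axiom~(3) and normality axiom~(5) give $B_{i,y}\cap B_{i,z}=B_{i,y\cdot z}$ and $B_{i,1}=U$, while the commutation axiom~(6), $\sub{i}{\eta}I(i)=I(\eta)\sub{i}{\eta}$, is what lets one refine an intersection $B_{i,y}\cap B_{j,z}$ of traces from two different coordinates and is ultimately the reason a single topology on $U$ can induce all the $I_i$ at once.

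The heart of the proof, and the step I expect to be the main obstacle, is the identity $f(I(i)x)=I_i(f(x))$. Unwinding the definition of $I_i$, the condition $\bar\tau\in I_i(f(x))$ says that $\tau(i)/E$ lies in the topological interior of the $i$-fibre $X_{\tau,i}=\{\eta/E:\ {\sf s}_{\tau^i_\eta}x\in F\}$, where $\tau^i_\eta$ is $\tau$ with its value at $i$ reset to $\eta$; axiom~(1) is what guarantees that $I(i)$ depends only on this fibre, matching the fibrewise nature of $I_i$. The inclusion $f(I(i)x)\subseteq I_i(f(x))$ is the easy half: by $I(i)x\le x$ (axiom~(2)) the basic open $B_{i,x}$ lies inside $X_{\mathrm{Id},i}$ and contains $i/E$ exactly when $I(i)x\in F$, so membership of $\overline{\mathrm{Id}}$ transfers, and the general $\tau$ follows by equivariance of $f$ under the substitutions. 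The reverse inclusion $I_i(f(x))\subseteq f(I(i)x)$ is the delicate one, since the basic open witnessing interiority of $\tau(i)/E$ may be built from traces $B_{j,z}$ with $j\neq i$; here I would use the commutation axiom~(6), $\sub{j}{i}I(j)=I(i)\sub{j}{i}$, to rewrite each such trace as $I(i)$ of a suitable element, combine the finitely many of them with the meet axiom~(3), and then use idempotency (the intended axiom~(4), $I(i)I(i)=I(i)$) together with $I(i)z\le z$ to push the resulting open element below $I(i)x$ and conclude $I(i)x\in F$. The passage from $\tau=\mathrm{Id}$ to arbitrary $\tau$ is then handled by the same induction on $|\{k:\tau(k)\neq k\}|$ used in Lemma~\ref{t}. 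Once this identity holds for all $i$ and $x$, $f$ respects the interior operators, $(\B,(I_i)_{i<\alpha})$ is a weak set algebra with interior operator, and $f(a)\neq 0$, completing the proof.
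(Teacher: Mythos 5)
The gap is at your very first step: you extend $\{a\}$ to an \emph{arbitrary} Boolean ultrafilter $F$ of $\A$ and then invoke Lemma~\ref{t} to conclude that $f(x)=\{\bar\tau\in W:{\sf s}_\tau x\in F\}$ is a homomorphism. That is not enough. For $f$ to preserve cylindrifications, $F$ must have the Henkin (witness) property: whenever ${\sf s}_\tau{\sf c}_k x\in F$ there must exist $u$ with ${\sf s}_\tau{\sf s}^k_u x\in F$. In a dimension complemented algebra ${\sf c}_kx=\sum_{u\notin\Delta x}{\sf s}^k_ux$, but an arbitrary ultrafilter need not preserve these \emph{infinite} joins, and dimension complementedness by itself does not rescue this. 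Concretely: let $\A$ be the Lindenbaum--Tarski algebra (a locally finite, hence dimension complemented, algebra) of the theory $T$ of linear orders with no greatest element, let $x$ be the class of $v_1<v_0$, and let $F$ correspond to a maximal consistent extension of $T\cup\{\neg(v_1<v_u):u\neq 1\}$ (consistent: interpret all variables by one element). Then ${\sf c}_0x=1\in F$, yet ${\sf s}^0_ux\notin F$ for every $u$, so $\overline{Id}\in f({\sf c}_0x)\setminus {\sf c}_0f(x)$ and $f$ is not a homomorphism. (Lemma~\ref{t} as stated in the paper does invite your reading, but its appeal to \cite{IGPL} presupposes a specially constructed ultrafilter; the paper's own proof of the theorem never takes an arbitrary one.)

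What the paper supplies, and your proposal omits, is precisely this Henkin construction of $F$. The paper embeds $\A$ neatly into a dilation $\B\in Dc_{\kappa}$ with $\kappa$ regular, enumerates $\kappa\times B$ as $\langle (k_i,x_i):i<\kappa\rangle$, chooses fresh witnesses $u_i$ by recursion (regularity of $\kappa$ guaranteeing room), checks that the filter generated by $Y_1=\{a\}\cup\{-{\sf c}_{k_i}x_i+{\sf s}^{k_i}_{u_i}x_i: i<\kappa\}$ is proper, and only then extends it to the ultrafilter $F$ used in the quotient construction; this is what forces the implication ${\sf c}_kx\in F\Rightarrow {\sf s}^k_ux\in F$ for some $u$, hence preservation of ${\sf c}_k$. (The paper remarks the same could be done directly inside $\A$ without the dilation, but some such step-by-step securing of witnesses is unavoidable.) By contrast, the topological half of your plan is essentially the paper's: your subbasic traces $B_{i,y}=\{\eta/E: {\sf s}^i_\eta I(i)y\in F\}$ are exactly the members of the paper's neighbourhood family $q$, and your two-sided verification of $f(I(i)x)=I_i(f(x))$ via axioms (2), (3), (6) is the same computation, except that the paper works with $q$ directly rather than with the generated topology and so never needs your refinement step for intersections of traces from different coordinates. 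So keep that half, but insert the witness construction of $F$ before it; without it the map $f$ is not even a cylindric homomorphism, let alone one respecting the interior operators.
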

\begin{demo}{Proof} Although the proof can be proved more directly without  using dilations of $\A$, 
we prefer to resort to a neat embedding theorem to prepare for the next proof

Assume that $\A=\Nr_{\alpha}\B$ where $\B$ is dimension complemented and $\B\in Dc_{\kappa}$, $\kappa$ a regular cardinal.
Such a $\B$ exists exactly like the case and the interior opeartions are induced the natural way.
Arrange $\kappa\times \B$ 
into $\kappa$-termed sequences:
$\langle (k_i,x_i): i\in \kappa \rangle$
Since $\kappa$ is regular, we can define by recursion (or step-by-step) 
$\omega$-termed sequences of witnesses: 
$\langle u_i:i\in \kappa\rangle$ 
such that for all $i\in \kappa$ we have:
$$u_i\in \mu\smallsetminus
\Delta a\cup \cup_{j\leq i}(\Delta x_j\cup \Delta y_j)\cup \{u_j:j<i\}\cup \{v_j:j<i\}.$$
The regularity of $\kappa$ guarntees this.
Let  $$Y_1= \{a\}\cup \{-{\sf  c}_{k_i}x_i+{\sf s}_{u_i}^{k_i}x_i: i\in \kappa\},$$
Then the filter generated by $Y_1$ is proper. Extend to to an ultafilter. Define the equivalence relation as above,
and 
then define  $f:\A\to \wp(V)$ via
$$x\mapsto \{\bar{\tau} \in V: s_{\tau}x\in F\}$$
where $V$ is the set $^{\alpha}\alpha^{(Id)}$.
Let $$q=\{\{\bar{k}\in \beta: s_k^iI(i)p\in F\}: p\in A, i\in \alpha\}.$$
To define the interior operations, we set
for each $i<\kappa$ 
$$J(i): \wp(V)\to \wp (V)$$
by $$\bar{x}\in J(i)A\Longleftrightarrow \exists U\in q (x_i\in U\subseteq \{u\in \beta: x^i_u\in A\}.)$$ 
Here $\bar{x}$ is class of $x$ induced by the above relation. Then it is not hard to check that
$$\bar{x}\in \Psi(I(i)p)\Longleftrightarrow s_{x}I(i)p\in F\Longleftrightarrow s^i_{x_i}I(i) s^{j_1,\ldots j_n}_{x_1,\ldots x_n}p\in F.$$
We let $$y=[j_1|x_1]\ldots [j_n|x_n].$$ 
Now,
$$x\in J(i)\Psi(p)\Longleftrightarrow \exists U\in q (x_i\in U\subseteq \{u: x^i_u\in \psi(p)\}.$$
Every cylindric operation is preserved, it remains to check the interior operations.
The proof is very similar to that in \cite{g}. 
%'
We need to show: 
$$\psi(I(i)p)=J(i)\psi(p).$$
Since $x\in \psi(I(i)$, we have $x_i\in \{u: {\sf s}_u^i I(i){\sf s}_yp\in F\}\in q$
But $I(i){\sf s}_yp\leq {\sf s}_y p$ hence
$U=\{u: {\sf s}_i^u I(i){\sf s}_yp\in F\}\subseteq \{u: {\sf s}_i^u{\sf s}_yp\in F\}.$
Since $y=[j_1|x_1]\ldots [j_n|x_n],$ then we have:
$x_i\in U\subseteq \{u: x^i_u\in \Psi(p)\}.$
Thus $x\in J(i)\psi(p).$

Now we prove the converse direction, which is slightly harder.
Let $x\in J(i)\Psi(p)$. Let $U\in q$ such that 
$$x_i\in U\subseteq \{u: {\sf s}_u^i{\sf s}_xp\in F\},$$
%$$U=\{ u: s_u^j I(j)r\}\in F.$$
For every $u\in \beta,$ $u$ large enough, we have:
$${\sf s}_u^jI(j)r\in F\Longleftrightarrow {\sf s}_u^j{\sf s}_xp\in F,$$
 $${\sf s}_u^jI(j)r\land {\sf s}_u^i{\sf s}_xp\in F\Longleftrightarrow {\sf s}_u^jI(j)r\in F.$$
But
$${\sf s}_u^jI(j)r={\sf s}_u^iI(i){\sf S}(i|j)r,$$
So, containing the chain of equivalence
$${\sf s_u}^iI(i)s_j^ir\land {\sf s}_xp\leftrightarrow I(i){\sf S}(i|j)r\in F$$
$$ \forall i (I(i)S(i|j)r\land I(i) {\sf s}_xp\leftrightarrow I(i){\sf S}(i|j)r\in F$$
$${\sf s}_u^iI(i)S(i|j)r)\land {\sf s}_u^iI(i){\sf s}_xp\leftrightarrow {\sf s}_u^iI(i)S(i|j)r\in F$$
$${\sf s}_u^jI(j)r\land {\sf s}_u^jI(i)s_xp\leftrightarrow {\sf s}_u^jI(j)r\in F.$$
\end{demo}

\begin{theorem}Let $\alpha$ be an infinite ordinal. let $\beta$ be a cardinal.  Let $\rho:\beta\to \wp(\alpha)$ such that
$\alpha\sim \rho(i)$ is infinite for all $i\in \beta$. Then $\Fr_{\beta}^{\rho}TCA_{\alpha}$ has the interpolation property.
\end{theorem}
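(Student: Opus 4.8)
The plan is to deduce interpolation from a superamalgamation property of the class of representable topological cylindric algebras, namely the class into which the representation theorem proved above embeds every dimension complemented $(\A,I)$. For $G\subseteq\beta$ write $\Fr_G=\Sg\{g_i:i\in G\}$ for the subalgebra of $\Fr=\Fr_{\beta}^{\rho}TCA_{\alpha}$ generated by the free generators indexed by $G$. Interpolation is the statement that whenever $a\in\Fr_{G_0}$, $c\in\Fr_{G_1}$ and $a\leq c$, there is an interpolant $b\in\Fr_{G_0\cap G_1}$ with $a\leq b\leq c$. The hypothesis that $\alpha\smallsetminus\rho(i)$ is infinite for every $i<\beta$ guarantees that $\Fr$ is dimension complemented, and since this is a property of individual elements it is inherited by each subalgebra $\Fr_G$; that every subalgebra in sight falls under the representation theorem is exactly what drives the argument.

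First I would argue by contradiction: suppose $a\leq c$ but no interpolant exists. Work inside the common subalgebra $\A_2=\Fr_{G_0\cap G_1}$. The set $U=\{b\in\A_2: a\leq b\}$ is a filter and $\{b\in\A_2: b\leq c\}$ is an ideal, and the absence of an interpolant says precisely that these two are disjoint. By the Boolean prime ideal theorem there is then a Boolean ultrafilter $F$ of $\A_2$ with $U\subseteq F$ and $c\notin F$; thus $a$ sits in $F$ while $-c\in F$.

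Next I would build two weak set representations that agree on the common part. Applying the representation theorem (via a Henkin-style dilation as in the proof above) to $\A_0=\Fr_{G_0}$ and to $\A_1=\Fr_{G_1}$, I would run the two step-by-step constructions simultaneously, starting from ultrafilters $F_0\supseteq F$ on $\A_0$ and $F_1\supseteq F$ on $\A_1$ chosen so that $a\in F_0$ and $-c\in F_1$, and keeping the two equivalence relations $\bar E$ synchronised on the shared generators. The infinitely many spare dimensions supplied by $\alpha\smallsetminus\rho(i)$ let me choose, at every stage, fresh common witnesses $u_i$ serving both constructions at once, so that the cylindric witness conditions $-{\sf c}_{k_i}x_i+{\sf s}_{u_i}^{k_i}x_i$ and the substitution clauses defining the $J(i)$ can be met coherently on both sides. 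The outcome is a single weak set algebra $\B$ carrying an interior operator together with a homomorphism $h:\Fr_{G_0\cup G_1}\to\B$ and a class $\bar\tau$ lying in $h(a)$ but not in $h(c)$. Since $a\leq c$ and $h$ is a homomorphism, $h(a)\subseteq h(c)$, contradicting $\bar\tau\in h(a)\smallsetminus h(c)$.

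The main obstacle is the amalgamation step just described --- forcing the two Henkin constructions to cohere on $\A_2$ while preserving the interior operators. The interior clause $s_j^iI(i)p=I(j)s_j^ip$ ties $I$ to substitutions, so the common witnesses must be compatible with the chains of equivalences displayed at the end of the representation theorem above; verifying that one choice of $u_i$ works for both factors, and that the induced interior operations $J(i)$ on the glued base restrict to the same operation on $\A_2$, is the delicate point, and it is precisely here that the infinitude of $\alpha\smallsetminus\rho(i)$ is indispensable. Once superamalgamation is secured in this form, interpolation for $\Fr_{\beta}^{\rho}TCA_{\alpha}$ follows by the standard translation between the two properties.
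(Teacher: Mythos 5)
Your overall architecture (argue by contradiction, build ultrafilters on $\Sg(X_1)$ and $\Sg(X_2)$ agreeing on the common subalgebra, then paste the two weak representations using freeness) is the paper's architecture, but there is a genuine gap in the \emph{order} in which you carry it out, and it is fatal as written. You fix the ultrafilter $F$ on $\A_2=\Sg(X_1\cap X_2)$ first, by pure Boolean separation of the filter $\{b\in\A_2: a\leq b\}$ from the ideal $\{b\in\A_2: b\leq c\}$, and only afterwards try to extend $F$ to Henkin (witness-saturated) ultrafilters $F_0\ni a$ and $F_1\ni -c$. This cannot work in general. Note first that any ultrafilter $F_0$ of $\Sg(X_1)$ extending $F$ has trace exactly $F$ on $\A_2$: the trace is a proper filter of $\A_2$ containing the ultrafilter $F$, hence equals it. Now $\A_2$ is closed under all operations ${\sf c}_k$ and ${\sf s}^k_l$, and in a dimension complemented algebra the identity ${\sf c}_kz=\sum\{{\sf s}^k_lz: l\notin\Delta z\}$ is only an \emph{infinite} supremum, which an ultrafilter need not preserve; nothing in your separation step excludes an $F$ containing some ${\sf c}_kz$ ($z\in\A_2$) while containing none of the elements ${\sf s}^k_lz$, $l\notin\Delta z$. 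For such an $F$ every candidate witness ${\sf s}^k_lz$ lies in $\A_2\setminus F$, hence outside every extension $F_0$ of $F$; so no extension whatsoever can satisfy the witness condition needed to make $x\mapsto\{\bar\tau:{\sf s}_\tau x\in F_0\}$ a homomorphism, no matter how cleverly the fresh indices $u_i$ are synchronised. Freshness of witnesses (your appeal to $\alpha\smallsetminus\rho(i)$ being infinite) is not the issue; compatibility of the \emph{witness elements} with the common ultrafilter is, and it must be arranged before that ultrafilter is chosen.

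That inversion is precisely what the paper's proof avoids, and what your proposal skips. The paper first adjoins the witness sets $Y_1=\{a\}\cup\{-{\sf c}_{k_i}x_i+{\sf s}_{u_i}^{k_i}x_i: i\in\kappa\}$ and $Y_2=\{-c\}\cup\{-{\sf c}_{l_i}y_i+{\sf s}_{v_i}^{l_i}y_i: i\in\kappa\}$, forms the filters $H_1,H_2$ they generate in $\Sg(X_1)$, $\Sg(X_2)$, and then proves the key lemma: the filter $H$ of $\A_2$ generated by $(H_1\cap\A_2)\cup(H_2\cap\A_2)$ is \emph{proper}. That lemma is proved by induction on the number of witness terms, and its base case is exactly where ``no interpolant exists'' is used (if $a\leq b_0$, $-c\leq b_1$ and $b_0\cdot b_1=0$ with $b_0,b_1\in\A_2$, then $b_0$ would be an interpolant). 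Only then is an ultrafilter $H^*\supseteq H$ of $\A_2$ chosen and extended to $F_1\supseteq H_1\cup H^*$, $F_2\supseteq H_2\cup H^*$; the witness conditions hold because $Y_1\subseteq F_1$, $Y_2\subseteq F_2$ by construction, and the traces agree because both equal $H^*$. Your proposal never confronts this properness lemma, which is the real heart of the proof; the ``delicate point'' you flag (coherence of interior operators $J(i)$) is comparatively routine once the ultrafilters exist. A secondary omission: the paper runs its recursion in a dilation $\B\in TCA_\kappa$ with $\A=\Nr_\alpha\B$ and $\kappa$ regular, precisely so that at each of the $\kappa$ stages a fresh witness avoiding the union of all previously listed dimension sets exists, and so that failure of interpolation passes from $\A$ to $\B$ via ${\sf c}_{(\Gamma)}b$; working directly inside the $\alpha$-dimensional free algebra, as you do, this availability is not automatic.
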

\begin{proof}
Let $\A=\Fr_{\beta}^{\rho}TCA_{\alpha}$. Let $a\in \in \Sg X_1$ and $c\in \Sg X_2$ be such that $a\leq c$. We want to find an interpolant in 
$\Sg^{\A}(X_1\cap X_2)$. Let $\B\in TCA_{\kappa}$, $\kappa$ a regular cardinal, such that $\A=\Nr_{\alpha}\B$. 
Assume that no such interpolant exists in $\A$, then no interpolant exists in $\B$, because if $b$ is an interpolant
in $\Sg^{\B}(X_1\cap X_2),$ then there exists a finite set $\Gamma\subseteq \kappa\sim \alpha$, such that
${\sf c}_{(\Gamma)}b\in \Nr_{\alpha}\Sg^{\B}(X_1\cap X_2)=\Sg^{\Nr_{\alpha}\B}(X_1\cap X_2)=\Sg^{\A}(X_1\cap X_2)$; which is clearly an interpolant in $\A$.

Arrange $\kappa\times \Sg^{\cal B}(X_1)$ 
and $\kappa\times \Sg^{\cal B}(X_2)$ 
into $\kappa$-termed sequences:
$$\langle (k_i,x_i): i\in \kappa\rangle\text {  and  }\langle (l_i,y_i):i\in \kappa\rangle
\text {  respectively.}$$ Since $\kappa$ is regular, we can define by recursion 
$\omega$-termed sequences of witnesses: 
$$\langle u_i:i\in \kappa\rangle \text { and }\langle v_i:i\in \kappa\rangle$$ 
such that for all $i\in \kappa$ we have:
$$u_i\in \mu\smallsetminus
(\Delta a\cup \Delta c)\cup \cup_{j\leq i}(\Delta x_j\cup \Delta y_j)\cup \{u_j:j<i\}\cup \{v_j:j<i\}$$
and
$$v_i\in \mu\smallsetminus(\Delta a\cup \Delta c)\cup 
\cup_{j\leq i}(\Delta x_j\cup \Delta y_j)\cup \{u_j:j\leq i\}\cup \{v_j:j<i\}.$$

For a boolean algebra $\cal C$  and $Y\subseteq \cal C$, we write 
$fl^{\cal C}Y$ to denote the boolean filter generated by $Y$ in $\cal C.$  Now let 
$$Y_1= \{a\}\cup \{-{\sf  c}_{k_i}x_i+{\sf s}_{u_i}^{k_i}x_i: i\in \kappa\},$$
$$Y_2=\{-c\}\cup \{-{\sf  c}_{l_i}y_i +{\sf s}_{v_i}^{l_i}y_i:i\in \kappa\},$$
$$H_1= fl^{Bl\Sg^{B}(X_1)}Y_1,\  H_2=fl^{Bl\Sg^B(X_2)}Y_2,$$ and 
$$H=fl^{Bl\Sg^{B}(X_1\cap X_2)}[(H_1\cap \Sg^{B}(X_1\cap X_2)
\cup (H_2\cap \Sg^{B}(X_1\cap X_2)].$$
We claim that $H$ is a proper filter of $\Sg^{\B}(X_1\cap X_2).$
To prove this it is sufficient to consider any pair of finite, strictly
increasing sequences of natural numbers 
$$\eta(0)<\eta(1)\cdots <\eta(n-1)<\omega\text { and } \xi(0)<\xi(1)<\cdots 
<\xi(m-1)<\omega,$$
and to prove that the following condition holds:

(1) For any $b_0$, $b_1\in \Sg^{B}(X_1\cap X_2)$ such that
$$a\odot.\prod_{i<n}(-{\sf  c}_{k_{\eta(i)}}x_{\eta(i)}+{\sf s}_{u_{\eta(i)}}^{k_{\eta(i)}}x_{\eta(i)})\leq b_0$$  
and
$$(-c)\odot\prod_{i<m}
(-{\sf  c}_{l_{\xi(i)}}y_{\xi(i)}+{\sf s}_{v_{\xi(i)}}^{l_{\xi(i)}}y_{\xi(i)})\leq b_1$$
we have 
$$b_0+b_1\neq 0.$$
We prove this by induction on $n+m$

Proving that $H$ is a proper filter of $\Sg^{\cal B}(X_1\cap X_2)$,
let $H^*$ be a (proper boolean) ultrafilter of $\Sg^{\cal B}(X_1\cap X_2)$
containing $H.$
We obtain  ultrafilters $F_1$ and $F_2$ of $\Sg^{\cal B}(X_1)$ and $\Sg^{\cal B}(X_2)$,
respectively, such that
$$H^*\subseteq F_1,\ \  H^*\subseteq F_2$$
and (**)
$$F_1\cap \Sg^{\cal B}(X_1\cap X_2)= H^*= F_2\cap \Sg^{\cal B}(X_1\cap X_2).$$
Now for all $x\in \Sg^{\cal B}(X_1\cap X_2)$ we have
$$x\in F_1\text { if and only if } x\in F_2.$$
Also from how we defined our ultrafilters, $F_i$ for $i\in \{1,2\}$ satisfy the following
condition:

(*) For all $k<\mu$, for all $x\in Sg^{\cal B}X_i$
if ${\sf  c}_kx\in F_i$ then ${\sf s}_l^kx$ is in $F_i$ for some $l\notin \Delta x.$
We obtain  ultrafilters $F_1$ and $F_2$ of $\Sg^{\B}X_1$ and $\Sg^{\B}X_2$, 
respectively, such that 
$$H^*\subseteq F_1,\ \  H^*\subseteq F_2$$
and (**)
$$F_1\cap \Sg^{\B}(X_1\cap X_2)= H^*= F_2\cap \Sg^{\B}(X_1\cap X_2).$$
Now for all $x\in \Sg^{\cal B}(X_1\cap X_2)$ we have 
$$x\in F_1\text { if and only if } x\in F_2.$$ 
%Also from how we defined our ultrafilters, $F_i$ for $i\in \{1,2\}$ are perfect.  

Then define the homomorphisms, one on each subalgebra, exactly like above 
then freeness will enable pase these homomophisms, to a single one defined to the set of free generators, 
which we can assume to be, without any loss, to 
be $X_1\cap X_2$ and it will satisfy  $h(a.-c)\neq 0$ which is a contradiction.

\end{proof}

\section{Omitting types, and counting models}

\begin{theorem}\label{infinite} 
\begin{enumarab}
\item Let $\A\in TA_n$ be  countable. Assume that 
$\kappa<covK$, where $covK$ is the least cardinal $\kappa$ such that the real line can be covered by
pairwise disjoint $\kappa$ nowhere-dense sets. Let $(\Gamma_i:i\in \kappa)$ be a set of non-principal types in $\A$. 
Then there is a interior weak set algebra $\B$ with an interior operator,
and an injective homomorphism  $f:\A\to \B$ such that $\bigcap_{x\in X_i}f(x)=\emptyset$, and $f(a)\neq 0.$
\item If the $X_i$ are ultrafilters, then we can omit $\lambda< 2^{\omega}.$
\end{enumarab}
\end{theorem}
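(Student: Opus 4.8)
The plan is to prove both parts by a Henkin-style omitting-types argument, building on the representation theorem and the witness construction already used in the completeness proof above. The essential idea is that omitting a non-principal type corresponds to adding, during the construction of the ultrafilter, enough witnesses to defeat each type; the cardinal bound $covK$ (respectively $2^\omega$) measures exactly how many such omission-requirements can be simultaneously met when the underlying Boolean space is the Cantor set (respectively a general space).

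\medskip\noindent\textit{Step 1: Dilation and enumeration.} First I would pass to a dilation $\B\in TCA_\kappa$ with $\A=\Nr_n\B$, exactly as in the proof of the representation theorem, with $\kappa$ a regular cardinal giving us a supply of spare dimensions to serve as witnesses. Then I would fix a countable Boolean (indeed cylindric) subalgebra large enough to contain $a$ together with all the elements of the types $\Gamma_i$ and arrange $\kappa\times\B$ into a $\kappa$-termed sequence, so that the recursive witness selection $u_i\in\mu\smallsetminus(\cdots)$ can be carried out as before. This sets up the combinatorial skeleton on which the omitting conditions will be imposed.

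\medskip\noindent\textit{Step 2: Translating ``omit $\Gamma_i$'' into a meager set.} The key step is to realise that, for each type $\Gamma_i$, the set of ultrafilters of the countable Boolean reduct $Bl\,\B$ that contain every element of $\Gamma_i$ — equivalently, fail to omit $\Gamma_i$ — is a nowhere-dense (closed) subset of the Stone space, precisely because $\Gamma_i$ is non-principal: non-principality says no single nonzero element forces the whole type, so the corresponding closed set has empty interior. The additional ``witness'' requirements of condition $(*)$ above (for each $k<\mu$, if ${\sf c}_k x\in F$ then ${\sf s}_l^k x\in F$ for some spare $l$) likewise carve out countably many nowhere-dense sets, as in the standard Orey–Henkin treatment. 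Since the Stone space of a countable Boolean algebra is a compact metrizable zero-dimensional space (homeomorphic to a closed subset of the Cantor set), the hypothesis $\kappa<covK$ guarantees that the union of these $\kappa$-many nowhere-dense sets, together with the countably many witness-sets, cannot cover the space; choosing $a\in F$ as well keeps us inside the basic clopen set $\{F: a\in F\}$, which is itself nonmeager.

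\medskip\noindent\textit{Step 3: Extracting the ultrafilter and building the representation.} Picking an ultrafilter $F$ in the complement of this union, I would then run the representation construction of the earlier theorem verbatim: define $E$, the relation $\bar E$, the space $V=\,^\alpha\alpha^{(Id)}/\bar E$, the homomorphism $f(x)=\{\bar\tau: {\sf s}_\tau x\in F\}$ and the induced interior operators $J(i)$. Condition $(*)$ ensures $f$ is a genuine homomorphism into a weak set algebra (the witnesses make cylindrifications represent as honest unions), injectivity on $\A$ follows from $a\in F$ running over all nonzero $a$, and $F$ avoiding each $\Gamma_i$-set gives $\bigcap_{x\in\Gamma_i}f(x)=\emptyset$, which is exactly omission. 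Finally, restricting $f$ back to $\A=\Nr_n\B$ yields the desired interior weak set algebra and embedding.

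\medskip For part (2), when each $\Gamma_i=X_i$ is an ultrafilter (a maximal, hence principal-in-the-Boolean-sense but non-principal-as-a-type object), the ``bad'' set for omitting $X_i$ is a single point of the Stone space rather than a nowhere-dense set with interior issues; consequently the relevant covering number drops to the number of points one must avoid, which is bounded by $2^\omega$, so one may omit $\lambda<2^\omega$ such types by a straightforward cardinality count showing the avoided singletons cannot exhaust the nonmeager clopen set $\{F:a\in F\}$. The main obstacle I anticipate is Step 2: verifying rigorously that non-principality of $\Gamma_i$ translates into \emph{nowhere-density} in the Stone space of the countable reduct, and checking that the interior operators $J(i)$ are still well-defined and satisfy the axioms of Definition when $F$ is chosen generically rather than merely as an arbitrary ultrafilter — in other words, confirming that the topological structure survives the Baire-category selection.
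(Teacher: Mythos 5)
Your proposal for part (1) follows the paper's skeleton (Stone space, nowhere dense ``bad'' sets, Baire category, then the canonical representation), but it has one structural defect and one genuine gap. Structurally, the paper never passes to a dilation: it works with the countable algebra itself, where dimension-complementedness already yields ${\sf c}_jx=\sum_{i\notin\Delta x}{\sf s}_i^jx$ (\cite[1.11.6]{HMT1}), and this supremum is exactly what makes the witness sets $N_{{\sf c}_jx}\setminus\bigcup_{i\notin\Delta x}N_{{\sf s}_i^jx}$ nowhere dense; no spare dimensions are needed. Your detour through $\B\in TCA_{\kappa}$ with $\kappa$ uncountable regular is not harmless: the Stone space of $\B$ is no longer metrizable, $covK$ is a covering number for Polish spaces, and you would further have to check that the types keep infimum $0$ in the dilation and that your countable subalgebra of $\B$ inherits the supremum property --- none of which you address. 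The genuine gap is in your Step 2: your bad set for $\Gamma_i$ is only $\bigcap_{x\in\Gamma_i}N_x$, and the equivalence you assert (``contains every element of $\Gamma_i$ iff fails to omit $\Gamma_i$'') is false. Since the representation is $f(x)=\{\bar\tau:{\sf s}_\tau x\in F\}$, the type $\Gamma_i$ is realized iff there is \emph{some} finite transformation $\tau\in V$ with $F\in\bigcap_{x\in\Gamma_i}N_{{\sf s}_\tau x}$; avoiding your set blocks only $\tau=Id$. One must avoid $H_{i,\tau}=\bigcap_{x\in\Gamma_i}N_{{\sf s}_\tau x}$ for \emph{every} $\tau\in V$, and the nowhere density of these sets is not a consequence of non-principality alone: it uses that ${\sf s}_\tau$ is a \emph{complete} Boolean endomorphism (\cite[1.11.12(iii)]{HMT1}), so that $\prod {\sf s}_\tau[\Gamma_i]={\sf s}_\tau\prod\Gamma_i=0$. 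This is precisely the displayed equation $\prod^{\A}X_{i,\tau}=0$ in the paper's proof, and it is missing from yours.

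Part (2) of your proposal does not work, and the failure is not repairable within your framework. The bad sets are not singletons: for $\tau\neq Id$, the set $H_{i,\tau}$ is the fiber over the point $X_i$ of the Stone dual of ${\sf s}_\tau$, i.e.\ the set of all ultrafilters containing the filter base ${\sf s}_\tau[X_i]$, and this can have cardinality $2^{\omega}$ (when $\tau$ merges two indices, the substituted type can be completed in continuum many ways); only the $\tau=Id$ fiber is the singleton $\{X_i\}$. So you must avoid $\lambda$-many closed nowhere dense sets, each possibly of size continuum, and no cardinality count applies. Nor can the category argument be pushed: it is consistent that $covK<2^{\omega}$, and then a family of $\lambda$ closed nowhere dense sets with $covK\leq\lambda<2^{\omega}$ can cover the entire Stone space, so a single generic ultrafilter avoiding them all need not exist. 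This is exactly why the paper abandons Baire category for part (2) and uses a completely different idea (attributed to Shelah): countably many ``experts'' with winning strategies build a tree of $2^{\omega}$ chains of conditions, producing $2^{\omega}$ representations such that any ultrafilter realized in two of them is principal; then, given $\lambda<2^{\omega}$ non-principal ultrafilters, the sets $\psi(i)=\{F: F\text{ realized in }\B_i\}$ are pairwise disjoint, so if every $\B_i$ realized one of the given ultrafilters we would obtain $2^{\omega}$ pairwise disjoint nonempty subsets of a set of size $\lambda<2^{\omega}$, a contradiction; hence some $\B_i$ omits them all. Your proposal contains no substitute for this construction, so part (2) remains unproved.
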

\begin{demo}{Proof} 
For the first part, we have by \cite[1.11.6]{HMT1} that 
\begin{equation}\label{t1}
\begin{split} (\forall j<\alpha)(\forall x\in A)({\sf c}_jx=\sum_{i\in \alpha\smallsetminus \Delta x}
{\sf s}_i^jx.)
\end{split}
\end{equation}
Now let $V$ be the weak space $^{\omega}\omega^{(Id)}=\{s\in {}^{\omega}\omega: |\{i\in \omega: s_i\neq i\}|<\omega\}$.
For each $\tau\in V$ for each $i\in \kappa$, let
$$X_{i,\tau}=\{{\sf s}_{\tau}x: x\in X_i\}.$$
Here ${\sf s}_{\tau}$ 
is the unary operation as defined in  \cite[1.11.9]{HMT1}.
For each $\tau\in V,$ ${\sf s}_{\tau}$ is a complete
boolean endomorphism on $\B$ by \cite[1.11.12(iii)]{HMT1}. 
It thus follows that 
\begin{equation}\label{t2}\begin{split}
(\forall\tau\in V)(\forall  i\in \kappa)\prod{}^{\A}X_{i,\tau}=0
\end{split}
\end{equation}
Let $S$ be the Stone space of the Boolean part of $\A$, and for $x\in \A$, let $N_x$ 
denote the clopen set consisting of all
boolean ultrafilters that contain $x$.
Then from \ref{t1}, \ref{t2}, it follows that for $x\in \A,$ $j<\beta$, $i<\kappa$ and 
$\tau\in V$, the sets 
$$\bold G_{j,x}=N_{{\sf c}_jx}\setminus \bigcup_{i\notin \Delta x} N_{{\sf s}_i^jx}
\text { and } \bold H_{i,\tau}=\bigcap_{x\in X_i} N_{{\sf s}_{\bar{\tau}}x}$$
are closed nowhere dense sets in $S$.
Also each $\bold H_{i,\tau}$ is closed and nowhere 
dense.
Let $$\bold G=\bigcup_{j\in \beta}\bigcup_{x\in B}\bold G_{j,x}
\text { and }\bold H=\bigcup_{i\in \kappa}\bigcup_{\tau\in V}\bold H_{i,\tau.}$$
By properties of $covK$, it can be shown $\bold H$ is a countable collection of nowhere dense sets.
By the Baire Category theorem  for compact Hausdorff spaces, we get that $H(A)=S\sim \bold H\cup \bold G$ is dense in $S$.
Accordingly let $F$ be an ultrafilter in $N_a\cap X$.
By the very choice of $F$, it follows that $a\in F$ and  we have the following 
\begin{equation}
\begin{split}
(\forall j<\beta)(\forall x\in B)({\sf c}_jx\in F\implies
(\exists j\notin \Delta x){\sf s}_j^ix\in F.)
\end{split}
\end{equation}
and 
\begin{equation}
\begin{split}
(\forall i<\kappa)(\forall \tau\in V)(\exists x\in X_i){\sf s}_{\tau}x\notin F. 
\end{split}
\end{equation}
Next we form the canonical representation corresponding to $F$
in which satisfaction coincides with genericity. 
To handle equality. we define
$$E=\{(i,j)\in {}^2{\alpha}: {\sf d}_{ij}\in F\}.$$
$E$ is an equivalence relation on $\alpha$.   
$E$ is reflexive because ${\sf d}_{ii}=1$ and symmetric 
because ${\sf d}_{ij}={\sf d}_{ji}.$
$E$ is transitive because $F$ is a filter and for all $k,l,u<\alpha$, with $l\notin \{k,u\}$, 
we have 
$${\sf d}_{kl}\cdot {\sf d}_{lu}\leq {\sf c}_l({\sf d}_{kl}\cdot {\sf d}_{lu})={\sf d}_{ku}.$$
Let $M= \alpha/E$ and for $i\in \omega$, let $q(i)=i/E$. 
Let $W$ be the weak space $^{\alpha}M^{(q)}.$
For $h\in W,$ we write $h=\bar{\tau}$ if $\tau\in V$ is such that
$\tau(i)/E=h(i)$ for all $i\in \omega$. $\tau$ of course may
not be unique.
Define $f$ from $\B$ to the full weak set algebra with unit $W$ as follows:
$$f(x)=\{ \tau\in {}^{\omega}\omega:  {\sf s}_{\tau}x\in F\}, \text { for } x\in \A.$$ 
Then it can be checked that $f$
is a homomorphism 
such that $f(a)\neq 0$ and 
$\bigcap f(X_i)=\emptyset$ for all $i\in \kappa$, hence the desired conclusion.

For the second part, the idea is that one can build several models such that they overlap only on isolated types.
One can build {\it two} models so that every maximal type which is realized in both is isolated. Using the jargon of Robinson's finite forcing
implemented via games, the idea
is that one distributes this job of building the two models among experts, each has a role to play, and that all have 
winning strategies. There is no difficulty in stretching the above idea to make the experts build three, four or any finite number of models 
which overlap only at principal types. 
With a pinch of diagonalisation we can extend the number to $\omega$. 

To push it still further to $^{\omega}2$ needs an entirely new idea (due to Shelah), which we will implement.
%This is one of those many places in model theory where we get continuum many models for the same price as two.

Algebraically, we first construct two representations of $\B$ such that if $F$ is an ultrafilter in $B$ that is realized in both representations, then $F$ is
necessarily principal, that is $\prod F$ is an atom generating $F$, then we sketch the idea of how to obtain $^{\omega}2$ many.
We construct two ultrafilters $T$ and $S$ of $\B$ such that (*)
$\forall \tau_1, \tau_2\in {}^{\omega}\omega( G_1=\{a\in \B: {\sf s}_{\tau_1}a\in T\},
G_2=\{a\in \B: s_{\tau_1}a\in S\})\\\implies G_1\neq G_2 \text { or $G_1$ is principal.}$
Note that $G_1$ and $G_2$ are indeed ultrafilters.
We construct $S$ and $T$ as a union of a chain. We carry out various tasks as we build the chains.
The tasks are as in (*), as well as

(**) for all $a\in A$, if $c_ka\in T$, then $s_l^kx\in T$ for $l\notin \Delta x$.

(***) for all $a\in A$ either $a\in T$ or $-a\in T$, and same for $S$.

We let $S_0=T_0=\{1\}$.
There are countably many tasks. Metaphorically we hire countably many experts and give them one task each.
We partition $\omega$ into infinitely many sets and we assign one of these tasks to each expert.
When $T_{i-1}$ and $S_{i-1}$ have been chosen and $i$ is in the set assigned to some expert $E$, then $E$ will construct
$T_i$ and $S_i$.

Let us start with  task $(**)$. The expert waits until she is given a set $T_{i-1}$ which contains $c_ka$ for some $k<\omega$.
 Every time this happen she look for a {\it witness} $l$ which is outside elements in $T_{i-1}$; 
this is possible since the latter is finite, then she sets $T_i=T_{i-1}\cup \{s_k^la\}$. 
Otherwise, she does nothing. This strategy works because her subset
of $\omega$ is infinite, hence contains arbitrarily large numbers. Same for $S_i$.

Now consider the expert who handles task (**). Let $X$ be her subset of $\omega$. Let her list as $(a_i: i\in X)$ all elements of $X$.
When $T_{i-1}$ has been chosen with $i\in X$, she should consider whether $T_{i-1}\cup \{a_i\}$ is consistent. If it is she puts
$T_i=T_{i-1}\cup \{a_i\}$. If not she puts $T_i=T_{i-1}\cup \{-a_i\}$. Same for $S_i$.
%Next consider the expert who deals with the tasks in \ref{t5}. She waits until she is gets a set $T_{i-1}$ which contains ${\sf c}_ka$.
%Every time this happens she chooses $l\notin \Delta a$
%which is not used in $T_{i-1}$, and she puts $T_i=T_{i-1}\cup \{{\sf s}_k^la\}$. Same for $S_i$.

Now finally consider the crucial tasks in (*). Suppose that $X$ contains $i,$ and $S_{i-1}$ and $T_{i-1}$ have been chosen.
Let $e=\bigwedge S_{i-1}$ and $f=\bigwedge T_{i-1}$. We have two cases.
If $e$ is an atom in $B$ then the ultrafilter $F$ containing $e$ is principal so our expert can put $S_i=S_{i-1}$ and $T_i=T_{i-1}$.
If not, then let $F_1$ , $F_2$ be distinct ultrafilters containing $e$. Let $G$ be an ultrafilter containing $e$, and assume that $F_1$ 
is different from $G$.
Let $\theta$ be in $F_1-G$. Then put $S_i=S_{i-1}\cup \{\theta\}$ and $T_i=T_{i-1}\cup \{-\theta\}.$
It is not hard to check that the canonical models, defined the usual way,  corresponding to $S$ and $T$ are as required.

To extend the idea, we allow experts at any stage to introduce a new chain of theories which is a duplicate copy of one of the chains being
constructed. The construction takes the form of a tree where each branch separately will give a chain of conditions.
By splitting the tree often enough the experts can guarantee that there are continuum many branches and hence continuum many
representations. This is a well know method, in model theory, when one gets $^{\omega}2$ many models for the price of two.
There is one expert whose job is to make sure that this property 
is enforcable for each pair of branches.
But she can do this task, because at each step the number of branches is still finite.

Assume not. Let ${\bold F}$ be the given set of non principal ultrafilters. Then for all $i<{}^{\omega}2$,
there exists $F$ such that $F$ is realized in $\B_i$. Let $\psi:{}^{\omega}2\to \wp(\bold F)$, be defined by
$\psi(i)=\{F: F \text { is realized in  }\B_i\}$.  Then for all $i<{}^{\omega}2$, $\psi(i)\neq \emptyset$.
Furthermore, for $i\neq j$, $\psi(i)\cap \psi(j)=\emptyset,$ for if $F\in \psi(i)\cap \psi(j)$ then it will be realized in
$\B_i$ and $\B_j$, and so it will be principal.  This implies that $|\bold F|={}^{\omega}2$ which is impossible.

%\end{enumroman}
%\end{proof}
%\end{proof}

\end{demo}

%And then proceed s above.
Now we count the non-isomorphic models, but first some definitions:
\begin{definition}
Let $\A$ and $\B$ be set algebras with bases $U$ and $W$ respectively. Then $\A$ and $\B$
are \emph{base isomorphic} if there exists a bijection
$f:U\to W$ such that $\bar{f}:\A\to \B$ defined by ${\bar f}(X)=\{y\in {}^{\alpha}W: f^{-1}\circ y\in x\}$ is an isomorphism from $\A$ to $\B$
\end{definition}
\begin{definition} An algebra $\A$ is \emph{hereditary atomic}, if each of its subalgebras is atomic.
\end{definition}
Finite Boolean algebras are hereditary atomic of course,
but there are infinite hereditary atomic Boolean algebras; any Boolean algebra generated by by its atoms is
hereditary atomic, for example the finite co-finite
algebra on any set. An algebra that is infinite and complete is not hereditory atomic, wheter atomic or not.

\begin{example}
Hereditary atomic algebras arise naturally as the Tarski Lindenbaum algebras of
certain countable first order theories, that abound. If $T$ is a countable complete first order theory
which has an an $\omega$-saturated model, then for each $n\in \omega$,
the Tarski Lindenbuam Boolean algebra $\Fm_n/T$ is hereditary atomic. Here $\Fm_n$ is the set of formulas using only
$n$ variables. For example $Th(\Q,<)$ is such with $\Q$ the $\omega$ saturated model.
\end{example}

A well known model-theoretic result is that $T$ has an $\omega$ saturated model iff $T$ has countably many $n$ types
for all $n$. Algebraically $n$ types are just ultrafilters in $\Fm_n/T$.
And indeed, what characterizes hereditary atomic algebras is that the base of their Stone space, that is the set of all
ultrafilters, is at most countable.

\begin{lemma}\label{b} Let $\B$ be a countable  Boolean algebra. If $\B$ is hereditary atomic then the number of ultrafilters is at most countable; ofcourse they are finite
if $\B$ is finite. If $\B$ is not hereditary atomic the it has $2^{\omega}$ ultarfilters.
\end{lemma}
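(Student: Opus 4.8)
The plan is to read everything through the Stone space $S$ of $\B$, whose points are the ultrafilters of $\B$ and whose clopen sets are exactly the sets $N_b=\{F\in S: b\in F\}$ for $b\in\B$. Since $\B$ is countable, the $N_b$ form a countable basis, so $S$ is a compact, second countable (hence metrizable) space, and in particular $|S|\le 2^{\omega}$. The Cantor--Bendixson theorem for Polish spaces then already forces the dichotomy: either $S$ is countable, or $S$ contains a nonempty perfect (dense-in-itself) subset, and in the latter case the perfect set contains a copy of the Cantor set, so $|S|=2^{\omega}$. The finite case is immediate, since a finite $\B$ is atomic with ultrafilters in bijection with its finitely many atoms. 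Thus the whole content is to match ``hereditary atomic'' with ``no perfect subset'' on the two sides of this dichotomy, which I would do by two tree constructions.

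First I would show that \emph{if $\B$ is not hereditary atomic then $|S|=2^{\omega}$}. By hypothesis some subalgebra $\B_{0}$ of $\B$ is not atomic, so there is a nonzero $b\in\B_{0}$ with no atom of $\B_{0}$ below it; in particular every nonzero element of $\B_{0}$ below $b$ splits into two disjoint nonzero pieces. Recursively I build a dyadic tree $(b_{s})_{s\in{}^{<\omega}2}$ in $\B_{0}$ with $b_{\langle\rangle}=b$, each $b_{s}\ne 0$, and $b_{s0},b_{s1}$ disjoint nonzero elements below $b_{s}$. For a branch $\sigma\in{}^{\omega}2$ the chain $(b_{\sigma\restriction n})_{n}$ is decreasing with all terms nonzero, hence generates a proper filter that extends to an ultrafilter $F_{\sigma}$ of $\B_{0}$; two branches first diverging at level $n$ are separated by the disjoint elements $b_{\sigma\restriction(n+1)}$ and $b_{\sigma'\restriction(n+1)}$, so $F_{\sigma}\ne F_{\sigma'}$. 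This yields $2^{\omega}$ distinct ultrafilters of $\B_{0}$. Finally the restriction map $S\to S(\B_{0})$, $G\mapsto G\cap\B_{0}$, is onto (every ultrafilter of a subalgebra extends to the whole algebra), so $|S|\ge 2^{\omega}$, whence $|S|=2^{\omega}$.

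Conversely I would show that \emph{if $\B$ is hereditary atomic then $|S|\le\aleph_{0}$}, arguing by contraposition through the dichotomy: if $|S|>\aleph_{0}$ then $S$ has a nonempty perfect subset $P$, and I produce from $P$ an atomless subalgebra of $\B$, contradicting hereditary atomicity. Using that a nonempty relatively clopen subset of a perfect set is again perfect, I build a Cantor scheme of clopen sets $N_{b_{s}}$ ($s\in{}^{<\omega}2$) each meeting $P$, arranged as refining partitions: at each node pick $x\ne y$ in $N_{b_{s}}\cap P$ and a clopen $N_{c}$ with $x\in N_{c}$, $y\notin N_{c}$, and set $b_{s0}=b_{s}\cdot c$, $b_{s1}=b_{s}\cdot(-c)$, so that $b_{s0}\vee b_{s1}=b_{s}$, $b_{s0}\cdot b_{s1}=0$, and both halves still meet $P$. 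Let $\D$ be the subalgebra of $\B$ generated by $\{b_{s}\}$. Because the levels refine, $\D$ is the union of the finite algebras whose atoms below $b_{\langle\rangle}$ are exactly the level-$n$ pieces $b_{s}$, and each such piece splits further at the next level; hence no atom of $\D$ lies below $b_{\langle\rangle}$, i.e. $\D$ is not atomic, the desired contradiction.

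The routine points (that the $N_b$ form a basis, that proper filters extend to ultrafilters, and that the restriction map is onto) are standard Stone duality, so \emph{the main obstacle is the two tree constructions}: in the second one the delicate step is engineering the Cantor scheme as a refining sequence of partitions, so that the atom structure of the generated subalgebra $\D$ is under control and atomlessness below $b_{\langle\rangle}$ is visible; everything else then follows from the Cantor--Bendixson dichotomy collapsing the possible cardinalities to exactly $\aleph_{0}$ and $2^{\omega}$.
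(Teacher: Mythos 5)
Your proof is correct, but it cannot really be compared with the paper's argument, because the paper offers none: its ``proof'' of Lemma \ref{b} is a bare citation of \cite{HMT1}, pp.~364--365, where hereditarily atomic (superatomic) Boolean algebras are discussed. What you have written is the standard self-contained argument that such a citation points to, and both halves are sound. In one direction, a non-atomic subalgebra $\B_{0}$ has a nonzero $b$ with no atom of $\B_{0}$ below it, so every nonzero element of $\B_{0}$ under $b$ splits into two disjoint nonzero parts; your dyadic tree then produces $2^{\omega}$ ultrafilters of $\B_{0}$, and these lift to $\B$ because restriction of ultrafilters is a surjection of Stone spaces --- note that this direction needs no Cantor--Bendixson at all. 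In the other direction, your use of Cantor--Bendixson (the Stone space of a countable algebra is compact metrizable, hence Polish, so if uncountable it contains a nonempty perfect set) together with the Cantor scheme of clopen sets is the right way to see that uncountably many ultrafilters force a non-atomic subalgebra: the refining-partition bookkeeping does control the generated subalgebra $\D$, since any nonzero element of $\D$ below the root is a finite join of level-$n$ pieces and each such piece splits at level $n+1$, so $\D$ has no atom below the root. Two small points you should make explicit: take the root $b_{\langle\rangle}=1$, so that $N_{b_{\langle\rangle}}$ automatically contains the perfect set $P$, and justify that $N_{b_{s}}\cap P$ contains two distinct points because a nonempty relatively open subset of a perfect set is infinite. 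Compared with the paper's citation, your route buys a self-contained proof and the explicit cardinality dichotomy ($\leq\aleph_{0}$ versus exactly $2^{\omega}$, with no intermediate value); the cost is the appeal to the perfect set property of Polish spaces, which is in any case the tool underlying the discussion cited in \cite{HMT1}.
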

\begin{proof}\cite{HMT1} p. 364-365  for a detailed discussion.
\end{proof}
Our next theorem is the, we believe, natural extension of Vaught's theorem to variable rich languages.
However, we address only languages with finitely many relation symbols. (Our algebras are finitely generated,
and being simple, this is equivalent to that it is
generated by a single element.)
%A famous conjecture of Vaught says that the number of non-isomorphic countable models of a complete theory
%is either $\leq \omega$ or exactly $^{\omega}2$. We show that this is the case for the multi (infinite) dimensional modal logic corresponding
%to $SA_{\alpha}$.

Now let us see  how far we can get, with proving an analogue of counting distinguishable models. We now count
distinguishable {\it weak} models.
Let $\A\in Dc_{\alpha}$. Now we hav only {\it finite} substitutions. As before, let
$$\mathcal{H}(\A)=\bigcap_{i<\omega,x\in A}(N_{-c_ix}\cup\bigcup_{j<\omega}N_{s^i_jx})$$ and, in the cylindric algebraic case, let
and
$$\mathcal{H}'(\A)=\mathcal{H}(\A)\cap\bigcap_{i\neq j\in\omega}N_{-d_{ij}}.$$

Now $\mathcal{H}(\A)$ and $\mathcal{H}'(\A)$ are $G_\delta$ subsets of $\A^*$, and are nonempty, in fact they are dense, and they
are  Polish spaces; 
Assume $\mathcal F\in \mathcal{H}(\A).$ For any $x\in A$, define the function
$\mathrm{rep}_{\mathcal F}$ to be
$$\mathrm{rep}_{\mathcal F}(x)=\{\tau\in{}^\omega\omega^{Id}:s_\tau x\in \mathcal F\}.$$
%We have the following results due to G. S\'agi and D. Szir\'aki; (see \cite{Sagi}).

\begin{theorem}\label{2} Let $\A\in Dc_{\alpha}$ be countable simple and finitely generated.
Then the number of non-base isomorphic representations of $\A$ is $2^{\omega}$.
\end{theorem}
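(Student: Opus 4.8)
The plan is to count representations by a descriptive-set-theoretic pigeonhole argument: exhibit $2^{\omega}$ representations parametrised by the Polish space $\mathcal{H}'(\A)$, show that the base-isomorphism relation can merge only countably many of them at a time, so that the number of classes cannot drop below $2^{\omega}$, and finally match this against the trivial upper bound to get equality.

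First I would dispose of the upper bound. Since $\A$ is countable and, $\A$ being countable, every representation may be taken to have a countable base, each representation is a function from the countable set $\A$ into the power set of a countable weak space; there are at most $(2^{\omega})^{\omega}=2^{\omega}$ such functions. Hence there are at most $2^{\omega}$ representations, a fortiori at most $2^{\omega}$ base-isomorphism classes.

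For the lower bound I would use the assignment $\mathcal{F}\mapsto \mathrm{rep}_{\mathcal{F}}$ already introduced, with $\mathcal{F}$ ranging over $\mathcal{H}'(\A)$. Each such $\mathcal{F}$ yields a genuine representation whose unit is the \emph{countable} weak space $^{\omega}\omega^{(Id)}$; in particular $\mathrm{rep}_{\mathcal{F}}$ realises only countably many complete types (ultrafilters of $\A$), namely the $\{x:{\sf s}_{\tau}x\in\mathcal{F}\}$ as $\tau$ ranges over the countable index set, and at $\tau=Id$ one recovers $\{x:x\in\mathcal{F}\}=\mathcal{F}$, so $\mathrm{rep}_{\mathcal{F}}$ realises $\mathcal{F}$ itself. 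Two ingredients then drive the count. (i) Since $\A$ is simple and finitely generated it is generated by a single element, so any endomorphism is fixed by the image of that generator; as $\A$ is countable this gives $|\mathrm{Aut}(\A)|\leq\aleph_{0}$. (ii) If $\mathrm{rep}_{\mathcal{F}_{1}}$ and $\mathrm{rep}_{\mathcal{F}_{2}}$ are base isomorphic via a base bijection inducing $\bar f$, then (the representations being injective) $\sigma:=(\mathrm{rep}_{\mathcal{F}_{2}})^{-1}\circ\bar f\circ \mathrm{rep}_{\mathcal{F}_{1}}$ is an automorphism of $\A$, and a direct computation shows that the realised type-set of $\mathrm{rep}_{\mathcal{F}_{2}}$ is the $\sigma$-image of that of $\mathrm{rep}_{\mathcal{F}_{1}}$. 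Combining (i) and (ii), each base-isomorphism class realises at most $\aleph_{0}\cdot\aleph_{0}=\aleph_{0}$ types in total. Consequently the map $\mathcal{F}\mapsto[\mathrm{rep}_{\mathcal{F}}]$ is countable-to-one: a class $C$ can be hit only by those $\mathcal{F}$ lying among the countably many types realised by members of $C$ (since $\mathcal{F}$ is realised by $\mathrm{rep}_{\mathcal{F}}\in C$). Therefore the number of classes is at least $|\mathcal{H}'(\A)|/\aleph_{0}$.

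It remains to show $|\mathcal{H}'(\A)|=2^{\omega}$, and this is the step I expect to be the main obstacle. We already know $\mathcal{H}'(\A)$ is a nonempty dense $G_{\delta}$, hence Polish; what must be added is that it is \emph{perfect} (has no isolated point), for then its cardinality is $2^{\omega}$. I would prove perfectness by a splitting argument: given $\mathcal{F}\in\mathcal{H}'(\A)$ and a basic clopen neighbourhood $N_{x}$, the dimension-complemented, infinite-dimensional structure always supplies a fresh coordinate outside $\Delta x$ together with a nonzero element allowing $\mathcal{F}$ to be perturbed to a distinct $\mathcal{F}'\in\mathcal{H}'(\A)\cap N_{x}$; equivalently, $\A$ is not hereditary atomic, so by Lemma \ref{b} its Stone space carries $2^{\omega}$ ultrafilters, and the witness and non-principality conditions cutting out $\mathcal{H}'(\A)$ are satisfied on a perfect subset. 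This is exactly where the infinite-dimensional and dimension-complemented hypotheses do the real work, and where the degenerate (hereditary atomic) cases must be seen to be excluded. Granting $|\mathcal{H}'(\A)|=2^{\omega}$, the displayed inequality yields at least $2^{\omega}$ classes, and together with the upper bound we conclude that the number of non-base-isomorphic representations of $\A$ is exactly $2^{\omega}$.
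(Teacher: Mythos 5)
Your counting mechanism is sound, and it is genuinely different from --- and at two points more careful than --- the paper's. The paper also parametrises representations by ultrafilters, via $h_F(a)=\{\tau\in V:{\sf s}_{\tau}a\in F\}$, but instead of your soft count it asserts a hard classification: $h_F(\A)$ and $h_G(\A)$ are base isomorphic iff $G={\sf s}_{\sigma}F$ for a finite permutation $\sigma$, so that the classes are the orbits of a countable group acting on $2^{\omega}$ ultrafilters. Its proof of the ``only if'' direction, however, silently assumes $\bar{\sigma}\circ h_F=h_G$, i.e.\ that the base isomorphism commutes with the two representations; in general the two sides differ by an automorphism of $\A$, and your items (i) and (ii) --- $|\mathrm{Aut}(\A)|\leq\aleph_0$ by single-generatedness, plus transport of realised types along that automorphism --- are exactly what absorbs this twist, at the modest price of a countable-to-one rather than bijective correspondence. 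Your restriction of the parameter to $\mathcal{H}'(\A)$ is also necessary, and the paper errs in ignoring it: for an arbitrary ultrafilter $F$, $h_F$ need not be a homomorphism at all, since preservation of ${\sf c}_i$ requires $F$ to respect the infinite joins ${\sf c}_ix=\sum_{j\notin\Delta x}{\sf s}^i_jx$, and preservation of diagonals requires ${\sf d}_{ij}\notin F$ for $i\neq j$; so Lemma \ref{b}, which counts \emph{all} ultrafilters, does not by itself supply $2^{\omega}$ usable parameters.

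Unfortunately, the step you flag as the main obstacle is a genuine gap, and neither of your two suggestions can close it, because the claim $|\mathcal{H}'(\A)|=2^{\omega}$ is false under exactly the hypotheses of the theorem: $\mathcal{H}'(\A)$ can be empty. Let $V_0={}^{\omega}2^{(\bar{0})}$ be the weak space over the two-element base anchored at the zero sequence, and let $\A$ be the subalgebra of $\wp(V_0)$ generated by $X_0=\{s\in V_0:s_0=1\}$ (essentially the Tarski--Lindenbaum algebra of the complete theory of a two-element structure with one unary predicate). Then $\A$ is countable, locally finite, hence in $Dc_{\omega}$, finitely generated, and simple: for nonzero $Y$ the element ${\sf c}_{(\Delta Y)}Y$ has empty dimension set, and in a weak set algebra the only such elements are $0$ and $1$. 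Its Boolean reduct is even atomless, so non-atomicity and Lemma \ref{b} give no traction. Yet by pigeonhole on the two-element base, ${\sf d}_{01}+{\sf d}_{02}+{\sf d}_{12}=1$ in $\A$, so \emph{every} ultrafilter contains some ${\sf d}_{ij}$ with $i\neq j$, and $\mathcal{H}'(\A)=\emptyset$. The structural point is that the conditions $-{\sf d}_{ij}\in F$ cutting $\mathcal{H}'(\A)$ out of $\mathcal{H}(\A)$ are closed but in general not dense (indeed the paper's assertion that $\mathcal{H}'(\A)$ is dense is always wrong, since $N_{{\sf d}_{01}}$ is a nonempty open set it misses), so no Baire-category, perfectness or splitting argument can manufacture points of $\mathcal{H}'(\A)$. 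To be clear, this defect is shared by the paper's own proof, which relies on the same premise without acknowledging it; and the conclusion of the theorem may still be true --- in the example above one gets $2^{\omega}$ pairwise non-base-isomorphic representations by varying the anchor $p$ of the weak space ${}^{\omega}2^{(p)}$ --- but any correct proof must either add a hypothesis guaranteeing an infinite base (e.g.\ that all finite products of elements $-{\sf d}_{ij}$, $i\neq j$, are nonzero, after which your argument can be relativised to the closed set $\bigcap_{i\neq j}N_{-{\sf d}_{ij}}$ below a witness of non-atomicity), or else count representations over quotient bases $\alpha/E$ and varying anchors, as in the omitting-types construction, rather than over the fixed weak space $V$.
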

\begin{proof} Let $V={}^{\alpha}\alpha^{(Id)}$ and let $\A$ be as in the hypothesis. Then $\A$ cannot be atomic \cite{HMT1} corollary 2.3.33,
least hereditary atomic. By \ref{b}, it has $2^{\omega}$ ultrafilters.

For an ultrafilter $F$, let $h_F(a)=\{\tau \in V: s_{\tau}a\in F\}$, $a\in \A$.
Then $h_F\neq 0$, indeed $Id\in h_F(a)$ for any $a\in \A$, hence $h_F$ is an injection, by simplicity of $\A$.
Now $h_F:\A\to \wp(V)$; all the $h_F$'s have the same target algebra.
We claim that $h_F(\A)$ is base isomorphic to $h_G(\A)$ iff there exists a finite bijection $\sigma\in V$ such that
$s_{\sigma}F=G$.
We set out to confirm our claim. Let $\sigma:\alpha\to \alpha$ be a finite bijection such that $s_{\sigma}F=G$.
Define $\Psi:h_F(\A)\to \wp(V)$ by $\Psi(X)=\{\tau\in V:\sigma^{-1}\circ \tau\in X\}$. Then, by definition, $\Psi$ is a base isomorphism.
We show that $\Psi(h_F(a))=h_G(a)$ for all $a\in \A$. Let $a\in A$. Let $X=\{\tau\in V: s_{\tau}a\in F\}$.
Let $Z=\Psi(X).$ Then
\begin{equation*}
\begin{split}
&Z=\{\tau\in V: \sigma^{-1}\circ \tau\in X\}\\
&=\{\tau\in V: s_{\sigma^{-1}\circ \tau}(a)\in F\}\\
&=\{\tau\in V: s_{\tau}a\in s_{\sigma}F\}\\
&=\{\tau\in V: s_{\tau}a\in G\}.\\
&=h_G(a)\\
\end{split}
\end{equation*}
Conversely, assume that $\bar{\sigma}$ establishes a base isomorphism between $h_F(\A)$ and $h_G(\A)$.
Then $\bar{\sigma}\circ h_F=h_G$.  We show that if $a\in F$, then $s_{\sigma}a\in G$.
Let $a\in F$, and let $X=h_{F}(a)$.
Then, we have
\begin{equation*}
\begin{split}
&\bar{\sigma\circ h_{F}}(a)=\sigma(X)\\
&=\{y\in V: \sigma^{-1}\circ y\in h_{F}(X)\}\\
&=\{y\in V: s_{\sigma^{-1}\circ y}a\in F\}\\
&=h_G(a)\\
\end{split}
\end{equation*}
Now we have $h_G(a)=\{y\in V: s_{y}a\in G\}.$
But $a\in F$. Hence $\sigma^{-1}\in h_G(a)$ so $s_{\sigma^{-1}}a\in G$, and hence $a\in s_{\sigma}G$.

Define the equivalence relation $\sim $ on the set of ultrafilters by $F\sim G$, if there exists a finite permutation $\sigma$
such that $F=s_{\sigma}G$. Then any equivalence class is countable, and so we have $^{\omega}2$ many orbits, which correspond to
the non base isomorphic representations of $\A$.
\end{proof}
The above theorem is not so  deep, as it might appear on first reading. The relatively simple
proof is an instance of the obvious fact that if a countable Polish group, acts on an uncountable Polish space, then the number of induced orbits
has the cardinality of the continuum, because it factors out an uncountable set by a countable one. In this case, it is quite easy to show
that  the Glimm-Effros Dichotomy holds.

\begin{theorem}
Let $T$ be a countable theory in a rich language, with only finitely many relation symbols,
and $\Gamma =\{\Gamma_i: i\in covK\}$ be non isolated types.
Then $T$ has $2^{\omega}$ weak models that omit $\Gamma$.
\end{theorem}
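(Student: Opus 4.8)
\begin{demo}{Proof}
The plan is to feed the omitting-types construction of Theorem~\ref{infinite} into the orbit-counting argument of Theorem~\ref{2}. First I would pass from $T$ to its Tarski--Lindenbaum algebra $\A=\Fm/T$. Since the language is rich, $\A$ is a $Dc_{\alpha}$ with $\alpha$ infinite (indeed it is locally finite, hence dimension complemented); it is countable because $T$ is, it is finitely generated because there are only finitely many relation symbols, and, replacing $T$ by a completion if need be, it is simple. Each non-isolated type $\Gamma_i$ becomes a non-principal type $X_i\subseteq A$ with $\prod{}^{\A}X_i=0$. Exactly as in the opening of the proof of Theorem~\ref{2}, $\A$ is not atomic by \cite{HMT1}, a fortiori not hereditary atomic, so Lemma~\ref{b} gives $2^{\omega}$ ultrafilters on $\A$, the non-principal ones forming a perfect subset of the Stone space $\A^{*}$.

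Next I would isolate the good representations. Work inside the Polish space $\mathcal{H}'(\A)$ of those ultrafilters $\mathcal F$ for which $\mathrm{rep}_{\mathcal F}$ is a genuine weak set representation, as set up just before Theorem~\ref{2}. For each $i$ and each finite $\tau\in V={}^{\alpha}\alpha^{(Id)}$ put $\bold H_{i,\tau}=\bigcap_{x\in \Gamma_i}N_{s_\tau x}$. Just as in the derivation of \ref{t2} and of the sets $\bold H_{i,\tau}$ in Theorem~\ref{infinite}, non-isolatedness of $\Gamma_i$ forces each $\bold H_{i,\tau}$ to be closed and nowhere dense; there are at most $covK\cdot\omega$ of them, so by the defining property of $covK$ their union cannot swallow $\mathcal{H}'(\A)$. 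The set
$$Y=\mathcal{H}'(\A)\smallsetminus\bigcup_{i,\tau}\bold H_{i,\tau}$$
is therefore non-meager, hence uncountable, and every $\mathcal F\in Y$ yields a weak model $\mathrm{rep}_{\mathcal F}$ of $T$ omitting all of $\Gamma$.

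Finally I would run the symmetry group. Let $G$ be the countable group of finite permutations of $\alpha$, acting on $\A^{*}$ by $\mathcal F\mapsto s_\sigma\mathcal F$. By the equivalence established in Theorem~\ref{2}, $\mathrm{rep}_{\mathcal F}$ and $\mathrm{rep}_{\mathcal G}$ are base isomorphic if and only if $\mathcal F,\mathcal G$ lie in one $G$-orbit, and every orbit is countable. Since a base isomorphism carries $\mathrm{rep}_{\mathcal F}(x)$ bijectively onto $\mathrm{rep}_{\mathcal G}(x)$, the property of omitting a type is a base-isomorphism invariant; hence $Y$ is $G$-invariant and splits into countable orbits, so the number of orbits equals $|Y|$. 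Granting $|Y|=2^{\omega}$ (see below), this yields $2^{\omega}$ pairwise non-base-isomorphic weak models of $T$ omitting $\Gamma$, precisely along the lines of the remark after Theorem~\ref{2} on countable Polish groups acting on uncountable Polish spaces.

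The step I expect to fight hardest for is upgrading ``$Y$ is uncountable'' to ``$|Y|=2^{\omega}$''. Deleting $<covK$ nowhere dense sets keeps $Y$ non-empty and non-meager, but a non-meager set can in principle have size below the continuum, so the orbit count might a priori stall at $non(K)$ rather than $2^{\omega}$. The robust remedy I would actually carry out is to interleave the type-avoidance requirements with the splitting requirements of the Shelah-style tree construction in the second part of Theorem~\ref{infinite}: grow a perfect binary tree of finitely generated filters, discharging at each node one further witnessing or realization-avoiding task, so that the $2^{\omega}$ branches directly furnish $2^{\omega}$ good ultrafilters lying in distinct $G$-orbits. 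This bypasses the gap between $add(K)$ and $covK$ and delivers the continuum of models outright.
\end{demo}
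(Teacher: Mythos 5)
Your overall strategy is the same as the paper's: pass to the Tarski--Lindenbaum algebra, remove the nowhere dense sets $\bigcap_{x\in\Gamma_i}N_{s_{\tau}x}$ from the good part of the Stone space as in Theorem~\ref{infinite}, and then count orbits of the countable group of finite permutations as in Theorem~\ref{2}. You have also correctly put your finger on the one delicate point: deleting fewer than $covK$ many nowhere dense sets only guarantees a dense, non-meager remainder, not obviously one of size $2^{\omega}$, so the orbit count does not immediately come out to the continuum. The paper resolves exactly this point differently from you: its (very terse) proof asserts that the $<covK$ union of the sets $\bigcap_{x\in\Gamma_i}N_{s_{\tau}x}$ can be \emph{reduced to a countable union} (the device imported from \cite{OTT}), so that the surviving set is the complement of a meager set in a Polish space, hence of cardinality $2^{\omega}$, and the orbit equivalence relation remains Borel with countable classes; the count then follows as in the remark after Theorem~\ref{2}.

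Your substitute for this step --- the perfect binary tree of finitely generated filters, discharging at each node one further witnessing or realization-avoiding task --- has a genuine gap when the number of types is uncountable, which is precisely the case the $covK$ hypothesis is there to allow (nothing prevents $\omega_{1}\leq\lambda<covK$). Every branch of your tree passes through only countably many nodes, so along any fixed branch only countably many tasks are ever explicitly discharged; but each branch must omit \emph{every} type $\Gamma_{i}$ at \emph{every} finite substitution $\tau$, that is, it must meet $\lambda\cdot\omega$ many dense sets. Explicit bookkeeping therefore cannot secure all requirements once $\lambda$ is uncountable; genericity with respect to uncountably many requirements is exactly what cannot be arranged node by node, and is the reason the Baire-category/$covK$ argument (or the paper's reduction to a countable union) is invoked in the first place. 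So your construction proves the theorem for countably many types but not in the stated generality; to close the gap you need either the countable-reduction claim used in the paper's proof, or some independent argument that the surviving set itself has power $2^{\omega}$ (for instance an invariance/homogeneity argument on the Stone space). A smaller slip in the same direction: you say there are ``at most $covK\cdot\omega$'' sets to avoid and that ``the defining property of $covK$'' prevents their union from covering; that property applies only to strictly fewer than $covK$ sets, so the hypothesis must be read as $\lambda<covK$ (as in Theorem~\ref{infinite} and in the paper's proof, which writes $\bigcup_{i<\lambda}$), not $\lambda=covK$.
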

\begin{proof}One takes $\bold H= H\cap -\bigcup_{i<\lambda}\bigcap_{x\in x_i, \tau \in V} N_{s_{\tau}}x.$
The $< covK$ union can be reduced to a countable union, and hence $\bold H$ is still dense, and the corresponding equivalence relation remains Borel.
\end{proof}

\end{document}